\newtheorem{cor}{Corollary}
\begin{document}

\title{On a borderline between the NP-hard and polynomial-time solvable cases of the flow shop with job-dependent storage requirements%\thanks{Grants or other notes
%about the article that should go on the front page should be
%placed here. General acknowledgments should be placed at the end of the article.}
}
%\subtitle{Flow Shop with Job-Dependent Buffer Requirements}

\titlerunning{On a borderline between the NP-hard and polynomial-time solvable cases}        % if too long for running head

\author{Alexander Kononov        \and
        Julia Memar                 \and
        Yakov Zinder
}

\authorrunning{A. Kononov et al.}

\institute{A. Kononov \at
              Sobolev Institute of Mathematics, Siberian Branch of the Russian Academy of Sciences, Novosibirsk, Russia\\
              \email{alvenko@math.nsc.ru}           %  \\
%             \emph{Present address:} of F. Author  %  if needed
           \and
           J. Memar \at
            University of Technology Sydney, PO Box 123, Broadway, NSW, 2007, Australia\\
            \email{julia.memar@uts.edu.au}
            \and
            Y. Zinder \at
            University of Technology Sydney, PO Box 123, Broadway, NSW, 2007, Australia\\
            \email{yakov.zinder@uts.edu.au}
}
\date{Received: date / Accepted: date}
% The correct dates will be entered by the editor
%includes new proofs plus comp experiments

\maketitle

\begin{abstract}
The paper is concerned with the two-machine flow shop, where each job requires an additional resource (referred to as storage space) from the start of its first operation till the end of its second operation. The storage requirement of a job is determined by the processing time of its first operation. At any point in time, the total consumption of this additional resource cannot exceed a given limit (referred to as the storage capacity). The goal is to minimise the makespan, i.e. to minimise the time needed for the completion of all jobs. This problem is NP-hard in the strong sense. The paper analyses how the parameter - a lower bound on the storage capacity specified in terms of the processing times, affects the computational complexity.

\keywords{Flow shop \and Computational complexity \and Makespan \and Job-dependent storage requirements}

% \PACS{PACS code1 \and PACS code2 \and more}
% \subclass{MSC code1 \and MSC code2 \and more}
\end{abstract}

\section{Introduction}\label{sec:intro}

This paper considers the two-machine flow shop that is comprised of the first-stage machine $M_1$, the second-stage machine $M_2$, and an additional resource referred to as a buffer or storage space. The machines must process a set of jobs $N = \{1,...,n\}$.  In order to be processed a job $j \in N$ must be processed on the machine $M_1$ for $a_j$ units of time and, after that, on the machine $M_2$ for $b_j$ units of time. Once a machine starts to process a job, it continues processing till the completion of this operation. Each machine can process at most one job at a time. For each job, the processing on $M_2$ can commence only after the completion the processing of this job on $M_1$. Each job $j$ seizes $\omega(j)$ units of storage space (buffer) at the beginning of its processing on $M_1$ and releases this portion of storage space only at the completion of the job's processing on $M_2$. Similar to \cite{kononov2012quantity,kononov2019flow,kononova2013variable,lin2013sequence,lin2008minimize} it is assumed that, for each $j \in N$,  $\omega(j)=a_j$. At any point in time, the total consumption of the storage space cannot exceed $\Omega$ - the storage capacity.

The processing of jobs commences at time $t = 0$. A schedule $\sigma$ specifies for each $j \in N$ and each machine $M_i$, the point in time $S_j^i(\sigma)$ when this job commences its processing on $M_i$. Since each operation is processed without interruptions, for any $j\in N$,  the completion times of job $j$ on $M_1$ and $M_2$ are
\[
  C_j^1(\sigma) = S_j^1(\sigma) + a_j \hspace{0.3cm}\mbox{and}\hspace{0.3cm} C_j^2(\sigma) = S_j^2(\sigma) + b_j,
\]
respectively. The goal is to minimise the makespan
\[
 C_{max}(\sigma)=\max_{j\in N}C_j^2(\sigma).
\]
Following \cite{kononov2012quantity}, the considered problem will be referred to as PP-problem.

The flow shop problems with a buffer have been extensively studied in the literature on scheduling, but most of these publications consider flow shops with an intermediate buffer between stages and assume that this buffer limits only the number of jobs that have completed one operation and are waiting for the commencement of the next one \cite{brucker2003flow,brucker2012resource,emmons2013hybrid,papadimitriou1980flowshop,pinedo2016scheduling}. In contrast, in the scheduling problem, considered in this paper, the storage (buffer) requirement varies from job to job and each job seizes the required storage space during its processing on the machines as well as during its waiting time between the operations. This also differentiates this paper from the publications on resource constrained scheduling that assumes that the additional resource is consumed by a job only during its processing on the machines but it is released between the operations \cite{blazewicz1983scheduling,brucker2012resource}.

Although the practical significance of the flow shops with job-dependent storage requirements where a job uses this resource not only during its processing on the machines but also between the operations was acknowledged as early as in \cite{witt2007simple}, the active research in this field of scheduling theory was triggered by the publications \cite{lin2008minimize} and \cite{lin2009two} which were concerned with controlling the lag during a multimedia presentation of media objects, where each object has a loading time and a play time. An object's presentation cannot be started until it has finished loading. To avoid the potential presentation delay that occurs while waiting for loading, the player is designed to ``prefetch'' the objects prior to their intended presentation time. For applications that use portable devices such as PDAs to play multimedia objects, one must consider the memory size allowed for storing the objects. Under a network environment with a rather stable transmission rate, the download time of an object is proportional to its file size. Since, we can scale the file size in such a way that a unit of size is loaded per a unit of time. So, without loss of generality, we can assume that  the download time of an object is equal to the size of its file. This leads to the model where the duration of the operation on the first-stage machine is equal to the storage space requirement.

The two-machine flow shop with job-dependent storage requirements, considered in this paper, was used in the study on data gathering in networks with star topology \cite{berlinska2015scheduling,berlinska2020heuristics}. Such a network is comprised of a number of worker nodes, each transmitting its data set to the base station where this data set is to be processed. The base station can communicate with at most one worker node at a time and allocates to a data set the entire required memory from the beginning of its transmission till the end of its processing by the station.

The practical applications of the considered flow shop with job-dependent storage requirements can be found far beyond the confines of the systems where storage is the system computer memory. Thus, the interest of one of the coauthors of this paper in the PP-problem was originally inspired by the scheduling situations in supply chains where the change of the means of transportation involves unloading and loading, using certain storage space. Such situations arise, for example, in a supply chain of mineral resources  \cite{fung2015capacity}.

One of the key postulates of the PP-problem is the assumption that the storage space, required by a job, is determined by the processing time of its first operation. As it has been discussed above, this assumption is justified by several applications. On the other hand, it is known that, in manufacturing, the condition that the storage requirement is determined by the duration of one of the operations may be inadequate \cite{witt2007simple}. The generalisation of the PP-problem where the storage requirement of a job is not necessarily determined by the processing time of one of its operations was considered in \cite{zinder20215}. Other variations and generalisations of the PP-problem, considered in the literature, include the relaxation of the assumption that the storage capacity is a constant  \cite{berlinska2020two}; the replacement of the assumption that there exists only one pair of machines by the assumption that each job can be processed by one of several disjoint pairs of machines, each pair being assigned a storage capacity, which varies from pair to pair \cite{ernst2018flexible}; a flexible flow shop with batch processing \cite{gu2019improved}; as well as different objective functions.

The PP-problem is NP-hard in the strong sense \cite{lin2009two}. The problem remains NP-hard in the strong sense even under the restriction that, on one of the machines, the jobs are to be processed in a given sequence  \cite{GU2018143}. Furthermore, it has been proven in \cite{fung2016permutation} that there are instances for which the set of all optimal schedules does not contain a permutation schedule, that is, a schedule in which both machines process the jobs in the same order. Even the decision problem, requiring an answer to the question of whether or not the given instance is one of the instances that do not have an optimal schedule that is a permutation one, is NP-complete.

The computational complexity of the PP-problem motivated interest in branch-and-bound algorithms  \cite{kononov2012quantity,lin2009two,lin2013sequence} as well as in integer programming-based and metaheuristic optimisation methods \cite{kononova2013variable,le2020iterated}. Another direction of research, triggered by the computational complexity of the PP-probem, was the study of its various particular cases \cite{berlinska2020heuristics,kononov2019flow,le2019iterated,min2019two,zinder20215}. Thus, \cite{kononov2019flow} presents a polynomial-time algorithm and a proof that this algorithm constructs an optimal schedule for any instance of the PP-problem where the storage capacity is not less than five times the maximal processing time. This result raised the question of how the computational complexity changes with the variation of a lower bound on the storage capacity computed as a function of the processing times.

The remainder of this paper addresses this question. Section \ref{sec:NPH} presents a proof that the PP-problem remains NP-hard after the introduction of the additional assumption that, for any $\delta >0$,
\begin{equation}\label{Buffer/4}
     \frac{4}{1+\delta } \max_{i\in N} \{a_i, b_i \} \leq \Omega.
\end{equation}
Section \ref{sec:PolynomialCase} presents a polynomial-time algorithm and a proof that this algorithm constructs an optimal schedule if
\[
     3.5\max_{i\in N} a_i +\max \left \{0.5\max_{i\in N} a_i, \max_{i\in N} b_i \right \} \leq \Omega.
\]
Observe that these results not only contribute to the efforts aimed at establishing the borderline between the NP-hard and polynomial-time solvable, but also significantly strengthen the result in \cite{kononov2019flow} by replacing the factor 5 in the lower bound in \cite{kononov2019flow} by 4.5.

%--------------------------------------------------------------------------------------------------NP-hardness-----------------------------------------
\section{NP-hardness}\label{sec:NPH}

We show that the PP-problem remains NP-hard even if (\ref{Buffer/4}) holds. We use the following NP-hard variant of the partition problem.
The input consists of $2m$ positive integers $e_1, e_2, \ldots, e_{2m}$ such that
\[\sum_{k=1}^{2m} e_k=2E \mbox{   and    }\frac{E}{m+1} < e_k < \frac{E}{m-1}, \;\;\;\;k=1,\ldots,2m.\]
It is necessary to answer the following question: does there exist an index set $K,$ such that $\sum_{k \in K} e_k= E$ holds?  Without lost of generality we assume that
\begin{equation}\label{delta}
    m \geq \max\{2(1+\frac{1}{\delta}), 8\}.
\end{equation}

Consider the following instance of the PP-problem with $2m+4$ jobs. We have $2m$ regular jobs. For $j=1,\ldots,2m$ a regular job $j$ has processing times
$a_j=E$ and $b_j=E+e_j$, and $a_0=0$ and $b_0=E$ for the job $0$.  There are three special jobs
$2m+1,$ $2m+2,$ and $2m+3.$ For each special job $j$, let $a_j=E$ and $b_j=0$. The buffer capacity $\Omega = 4E(1-\frac{1}{(m-1)}) < 4E.$ We are going to check whether or not there exists a feasible schedule with makespan $C_{max} \leq (2m+3)E.$

It is easy to see that no four jobs fit in the buffer unless the job $0$ is one of the jobs. On the other hand, any three jobs can be in the buffer at the same time. Indeed, taking into account (\ref{delta}), we have
$$a_i+a_j+a_k < 3E + \frac{3E}{m-1} = 3E + \frac{7E}{m-1} - \frac{4E}{m-1}  \leq 4E \left (1-\frac{1}{(m-1)} \right ) = \Omega.$$
Furthermore, all jobs $0\leq i\leq 2m+3$, satisfy the condition (\ref{Buffer/4}):
$$\max_{i\in N} \{a_i, b_i \} \leq E + \frac{E}{m-1} = E\left( \frac{m}{m-1}\times\frac{m-2}{m-2} \right)= E \left( \frac{m-2}{m-1}\times\frac{m}{m-2}\right)=$$
$$=E \left( 1- \frac{1}{m-1} \right ) \left (1+\frac{2}{m-2} \right )\leq \frac{\Omega}{4}\left (1+\delta \right ).$$

Let $\sigma$ be a schedule with makespan $C_{max} \leq (2m+3)E.$ We note that the load on both machines is equal to $(2m+3)E.$
Thus, both machines are not idle during the interval $[0, (2m+3)E].$ It follows that $0$ is the first job in $\sigma$, and
the job sequences on the both machines are the same. Observe that the machine $M_2$ works without idle time if and only if for each job $i$ its completion
time $C^1_{i}(\sigma)$ does not exceed the completion time of the job which precedes it on $M_2.$ We note that $a_j=E$ for all jobs except the job $0$, and it follows that for all jobs in $\sigma$ except for the last job, $C^2_{i}(\sigma) - C^1_{i}(\sigma) \geq E.$

Without loss of generality we assume that the job $2m+1$ is the first special job in $\sigma$. Assume that job $i>0$ immediately precedes the job $2m+1.$

If $C^2_{i}(\sigma) - C^1_{i}(\sigma) < 2E,$ then $C^2_{2m+1}(\sigma) - C^1_{2m+1}(\sigma) < E$, which implies that the machine $M_2$ is idle
and hence $C_{max}(\sigma) > (2m+3)E.$  Suppose that $C^2_{i}(\sigma) - C^1_{i}(\sigma) > 2E.$ Since the machine $M_1$ processes the jobs without idle time,
then there are at least four jobs $j$ with $a_j=E$ in the buffer during the interval $[C^1_{i}(\sigma)+2E, C^2_{i}(\sigma)],$ which violates the buffer capacity. Thus,  $C^2_{i}(\sigma) - C^1_{i}(\sigma) = 2E.$

Denote by $K'$ the set of jobs that precede the job $2m+1$ in $\sigma.$ We have
\[2E = \sum_{i\in K'}b_i -  \sum_{i\in K'}a_i = b_0 + \sum_{i\in K} e_i = E + \sum_{i\in K } e_i.\]

Therefore, a feasible schedule with makespan $C_{max} \leq (2m+3)E$ exists if and only
if for $2m$ positive integers $e_1, e_2, \ldots, e_{2m}$ such that
\[\sum_{k=1}^{2m} e_k=2E \mbox{   and    }\frac{E}{m+1} < e_k < \frac{E}{m-1},  \;\;\;\;k=1,\ldots,2m,\]
there exists an index set $K$ such that $\displaystyle \sum_{i\in K } e_i = E$.

\section{Polynomial-time algorithm}\label{sec:PolynomialCase}
The PP-problem is strongly NP-hard \cite{lin2008minimize}. However, the computational complexity of the problem depends on the relationship between the size of jobs and the size of the buffer.
Indeed, the problem is easily solvable if the buffer size is large enough, for example, when all jobs can be simultaneously placed in the buffer.
In this case, the problem is equivalent to the two-machine flow shop problem without buffer and it can be solved in $O(n \log n)$ time by Johnson's algorithm \cite{johnson1954optimal}.
Johnson's rule can be stated as follows:
\begin{itemize}
  \item partition $N$ into two sets:
  \[
   L_1=\{i\in N: a_i < b_i\}
 \hspace{0.5cm} \mbox{and} \hspace{0.5cm}
   L_2=\{i\in N: a_i \geq b_i \};
  \]
  \item first schedule the jobs from $L_1$ in non-decreasing order of $a_i,$ and then schedule the jobs from $L_2$ in non-increasing order of $b_i$.
\end{itemize}

Denote by $\sigma^J$ the schedule constructed by Johnson's rule. The makespan $C_{max}(\sigma^J)$ is a lower bound  on the objective value of the PP-problem. Let
\[
 a_{max}=\max_{i\in N} a_i
 \hspace{0.5cm} \mbox{and} \hspace{0.5cm}
 b_{max}=\max_{i\in N} b_i.
\]
The polynomial-time algorithm below  constructs a schedule for an extended set of jobs obtained by adding some auxiliary jobs. The set of these auxiliary jobs is  determined by  schedule $\sigma^J$.  It is shown that if
\begin{equation}\label{Buffer/5}
    3.5a_{max} +\max\{0.5a_{max}, b_{max}\} \leq \Omega,
\end{equation}
then the makespan of this schedule is equal to $C_{max}(\sigma^J)$ which implies its optimality.

Number the jobs according to the sequence constructed by Johnson's rule, then
\begin{equation}\label{Johnson'sCmax}
  C_{max}(\sigma^J) = \max_k \left(\sum_{i=1}^k a_i + \sum_{i=k}^n b_i\right).
\end{equation}
Denote by $Idle_1$ and $Idle_2$ the total idle time in the interval $[0,C_{max}(\sigma^J)]$ on machines $M_1$ and $M_2,$ respectively. If the maximum in (\ref{Johnson'sCmax}) is obtained for $k = k'$, then
$Idle_1= C_{max}(\sigma^J) - \sum_{i=1}^n a_i  \leq \sum_{i=k'}^n b_i $ and $Idle_2= C_{max}(\sigma^J) - \sum_{i=1}^n b_i   \leq \sum_{i=1}^{k'} a_i.$

Let $X$ and $Y$ be two sets of auxiliary jobs of cardinality
\[
 |X| =  \left \lceil \frac{Idle_2}{b_{max}} \right \rceil
 \hspace{0.3cm} \mbox{and} \hspace{0.3cm}
 |Y| =  \left \lceil \frac{Idle_1}{a_{max}} \right \rceil
\]
and such that
\[
 a_i = \left\{\begin{array}{cl}
 			0, & \mbox{if } i \in X \\[3pt]
		\displaystyle	 \frac{Idle_1}{|Y|} , & \mbox{if } i \in Y
 		\end{array}
	\right.
 \hspace{0.5cm} \mbox{and} \hspace{0.5cm}
 b_i = \left\{\begin{array}{cl}
 		\displaystyle	\frac{Idle_2}{|X|}, & \mbox{if } i \in X \\[5pt]
			0, & \mbox{if } i \in Y
 		\end{array}
	\right. .
\]
%\end{equation}
Observe that
\begin{equation}\label{sum_a=sum_b}
  \sum_{i\in N \cup X \cup Y}a_i=\sum_{i\in N \cup X \cup Y}b_i,
\end{equation}
and that $a_i\leq a_{max}$ and $b_i\leq b_{max}$ for any job $i\in N \cup X \cup Y$. Consider a permutation schedule (a permutation schedule is a schedule with the same order of jobs on both machines) $\sigma'$ where the jobs of the set $N' = N \cup X \cup Y$ are scheduled as follows. The first $|X|$ jobs are the jobs constituting $X$, sequenced in arbitrary order; these jobs are followed by all jobs in $N$, scheduled according to Johnson's rule; the jobs from $N$ are followed by the arbitrary ordered remaining jobs, i.e. the jobs constituting $Y$. It is easy to see that
\[
 C_{max}(\sigma') = C_{max}(\sigma^J)
\]
and that $\sigma'$ can be viewed as a result of the application of Johnson rule to $N'$.

Let $\pi^J$ be the permutation of all jobs in $N'$ induced by the order in which these jobs are processed in $\sigma'$ and let $\pi_0$, $\pi_1$ and $\pi_2$ be the permutations of the jobs in
\[
	\begin{array}{c}
 L_0'=X\cup \left \{i \in L_1| \;\; a_i \leq \displaystyle \frac{1}{2}a_{max} \right \}, \\
\rule{0pt}{4ex}
 L_1' = \left\{i \in L_1| \;\; a_i >  \displaystyle  \frac{1}{2}a_{max}\right \}, \\
\rule{0pt}{4ex}
 L_2'=L_2\cup Y,
 	\end{array}
\]
respectively, induced by $\pi^J$.

The algorithm below constructs a permutation $\pi$ that specifies the order in which the jobs are processed in an optimal permutation schedule. It is convenient to use the following notation:
\[
 \begin{array}{ll}
 	\Delta_j = b_j - a_j, & \mbox{ for all } j \in N'; \\
\rule{0pt}{4ex}
	 \mu(k)=\displaystyle \sum_{j=k}^{|L_0'|} \Delta_{\pi_0(j)} , & \mbox{ for all } 1 \le k \le |L_0'|; \\
\rule{0pt}{3ex}
	 n' = n + |X| + |Y| ; &	 \\
\rule{0pt}{3ex}
	\displaystyle   l_{k,1}(\pi) = \sum_{j=1}^{k} a_{\pi(j)},  & \mbox{ for all } 1 \le k \le n'; \\
\rule{0pt}{3ex}
	\displaystyle  l_{k,2}(\pi) = \sum_{j=1}^{k} b_{\pi(j)}, & \mbox{ for all } 1 \le k \le n'; \\
\rule{0pt}{3ex}
	R_k(\pi) = l_{k,2}(\pi) - l_{k,1}(\pi), & \mbox{ for all } 1 \le k \le n'.
 \end{array}
\]
Let $\pi = \emptyset$ indicate that the permutation $\pi$ is not specified.

\begin{algorithm}[H]
\begin{algorithmic}[1]
\STATE Set $i=1, i_0=1, i_1=1, i_2=1, \pi=\emptyset, R_0(\pi) = 0, l_{0,1} = 0, l_{0,2} = 0$.
\WHILE {$ i_0 \leq |L_0'|$ }
 %          \IF {($R_{i-1} < 2a_{max}$  {\bf and} $\mu(i_0) \leq \frac{a_{max}}{2}$) {\bf or} $R_{i-1} \leq \frac{3}{2}a_{max}$}
           \IF {($R_{i-1} (\pi) < 2a_{max}$  {\bf and} $R_{i-1}(\pi)  + \Delta_{\pi_2(i_2)} + \mu(i_0) < a_{max}$) {\bf or} $R_{i-1} (\pi) < \frac{3}{2}a_{max}$}
                 \STATE set $\pi(i)=\pi_0(i_0)$, $i_0=i_0+1$;
                 \ELSE
                 \STATE set $\pi(i)=\pi_2(i_2)$, $i_2=i_2+1$;
           \ENDIF
            \STATE set $S_{\pi(i)}^1(\sigma)=l_{i-1,1}(\pi)$ and $S_{\pi(i)}^2(\sigma)=l_{i-1,2}(\pi)$; set  $i=i+1;$
\ENDWHILE
\WHILE {$i\leq n'$ }
           \IF {$R_{i-1}(\pi) < 2a_{max}$ {\bf and}  $i_1 \leq |L_1'|$}
                \STATE set $\pi(i)=\pi_1(i_1)$, $i_1=i_1+1$;
                 \ELSE
                \STATE set $\pi(i)=\pi_2(i_2)$, $i_2=i_2+1$;
           \ENDIF	
               \STATE set $S_{\pi(i)}^1(\sigma)=l_{i-1,1}(\pi)$ and $S_{\pi(i)}^2(\sigma)=l_{i-1,2}(\pi)$, set  $i=i+1$;
\ENDWHILE
\RETURN schedule $\sigma.$
\end{algorithmic}
\caption{}\label{algorithm}
\end{algorithm}

\begin{lemma}\label{while 1}
 Algorithm 1 schedules all jobs in $N'$.
\end{lemma}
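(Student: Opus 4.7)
The plan is to reduce the lemma to a single invariant and then sketch its proof. Since in every iteration of either loop $i$ is incremented by $1$ and the second loop's guard is $i \le n'$, the total number of iterations is at most $n'$, so termination is automatic. The first loop exits exactly when $i_0 = |L_0'| + 1$, placing every job of $L_0'$; the if-branch of the second loop is explicitly protected by $i_1 \le |L_1'|$, so $\pi_1$ is never over-indexed. Hence the only possible failure is a reference to $\pi_2(i_2)$ with $i_2 > |L_2'|$, and the lemma is equivalent to the invariant
\begin{equation*}
 i_2 \le |L_2'| \quad\text{whenever the else-branch of either loop selects a job from } L_2'.
\end{equation*}

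Granted this invariant, a brief counting argument finishes the job. Let $k$ denote the number of $L_2'$-picks made during the first loop, so that when the second loop begins $i = |L_0'| + k + 1$ and it performs exactly $|L_1'| + |L_2'| - k$ further iterations. At most $|L_1'|$ of these can be $L_1'$-picks, leaving at least $|L_2'| - k$ that must be $L_2'$-picks. The invariant forces equality in both counts, so the algorithm places exactly $|L_1'|$ jobs of $L_1'$ and the remaining $|L_2'| - k$ jobs of $L_2'$, producing a complete permutation of $N'$.

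The main obstacle is proving the invariant, which I would do by contradiction. Let $i^{*}$ be the first iteration at which the else-branch is attempted with $i_2 = |L_2'| + 1$; then every job of $L_2'$ has already been scheduled, so the unplaced jobs lie in $L_0' \cup L_1'$ and all satisfy $\Delta_j \ge 0$. By (\ref{sum_a=sum_b}) the running quantity $R_{i^{*}-1}(\pi)$ equals the sum of these non-negative $\Delta_j$'s. Bounding this sum using $a_j \le a_{max}$, $b_j \le b_{max}$, the defining inequalities $a_j \le \frac{1}{2}a_{max}$ on $L_0' \setminus X$ and $a_j > \frac{1}{2} a_{max}$ on $L_1'$, and the bound (\ref{Buffer/5}) on $\Omega$, should show that $R_{i^{*}-1}(\pi)$ is strictly below the threshold needed to fire the else-branch of whichever loop is active ($\frac{3}{2} a_{max}$ in the first loop, $2 a_{max}$ in the second when $i_1 \le |L_1'|$). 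The one remaining subcase, in which the else-branch of the second loop fires because $i_1 > |L_1'|$, is handled separately by the observation that once $L_1'$ is exhausted the unscheduled jobs lie entirely in $L_2'$, so the loop guard $i \le n'$ forces $i_2$ to run out precisely when $i$ reaches $n' + 1$. This contradicts the choice of $i^{*}$ and establishes the invariant, completing the proof.
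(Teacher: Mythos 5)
Your overall structure is sound and matches the paper's: the only thing that can go wrong is running out of $L_2'$ jobs when the else-branch fires, and the way to rule this out is to relate $R_{i-1}(\pi)$ to the unscheduled jobs via (\ref{sum_a=sum_b}) and the signs of the $\Delta_j$. The counting argument and the treatment of the subcase $i_1 = |L_1'|+1$ (all remaining jobs lie in $L_2'$) are both correct.

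However, the key step of your invariant proof contains a sign error that, as written, derails the argument. Since $R_{i^{*}-1}(\pi)=\sum_{j=1}^{i^{*}-1}\Delta_{\pi(j)}$ and the grand total $\sum_{j\in N'}\Delta_j$ is $0$ by (\ref{sum_a=sum_b}), the quantity $R_{i^{*}-1}(\pi)$ equals \emph{minus} the sum of the $\Delta_j$ over the unscheduled jobs, not that sum itself. With your (incorrect) sign, $R_{i^{*}-1}(\pi)$ is a sum of non-negative terms each of which can be as large as $b_{max}$, and no amount of bounding via $a_j\le a_{max}$, $b_j\le b_{max}$, the definition of $L_0'$ and $L_1'$, or the buffer bound (\ref{Buffer/5}) will push it below $\tfrac{3}{2}a_{max}$ or $2a_{max}$ --- indeed (\ref{Buffer/5}) is a constraint on $\Omega$ and plays no role in this lemma at all; invoking it here is a misdirection. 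With the sign corrected, the argument becomes immediate: all unscheduled jobs lie in $L_0'\cup L_1'$, where $\Delta_j>0$ (this is (\ref{eqn:R_up})), so $R_{i^{*}-1}(\pi)\le 0$, which already contradicts the thresholds $R_{i^{*}-1}(\pi)\ge\tfrac{3}{2}a_{max}$ (first loop) and $R_{i^{*}-1}(\pi)\ge 2a_{max}$ (second loop with $i_1\le|L_1'|$) required to fire the else-branch. This corrected version is exactly the paper's argument, run as a contrapositive: the paper notes that whenever the else-branch is needed one has $R_{i-1}(\pi)>0$, and that together with $R_{n'}(\pi)=0$ and the monotonicity relations (\ref{eqn:R_up})--(\ref{eqn:R_down}) this forces an unscheduled $L_2'$ job to remain. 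So fix the sign, delete the proposed bounding step and the reference to (\ref{Buffer/5}), and your proof goes through.
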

\begin{proof}
Taking into account the condition in line 2, the first while loop (line 2 - 9) schedules all jobs in $L_0'$ (and probably some jobs in $L_2'$) if, each time when line 4 is to be executed, there exists an unscheduled job from $L_0'$ and,
each time when line 6 is to be executed, there exists an unscheduled job from $L_2'$. Observe that the condition in line 2 guarantees this for line 4.

As far as the remaining jobs are concerned, by virtue of the condition in line 10, they all will be scheduled by the second while loop (lines 10 - 17) if, each time when line 12 is to be executed, there exists an unscheduled job from $L_1'$ and, each time when line 14 is to be executed, there exists an unscheduled job from $L_2'$. Observe that
the condition in line 11 guarantees this for line 12 whereas the condition in line 10 together with the equality
\begin{equation}\label{i1=|L1|+1}
 i_1 = |L_1'| + 1
\end{equation}
guarantees this for line 14.

If the execution of line 14 is needed and (\ref{i1=|L1|+1}) does not hold, then, for the corresponding $i$,  $R_{i-1}(\pi) > 0$. Such inequality also holds each time when the execution of line 6 is needed. Since, for any $1 \le i \le n'$,
\begin{eqnarray}
&&R_{i-1}(\pi) < R_{i}(\pi), \mbox{  if  }\pi(i) \in L_0' \cup L_1' \label{eqn:R_up}\\
&&R_{i-1}(\pi)  \geq R_{i}(\pi), \mbox{  if  }\pi(i) \in L_2', \label{eqn:R_down}
\end{eqnarray}
and since (\ref{sum_a=sum_b}) implies that $R_{n'}(\pi) = 0$, the inequality $R_{i-1}(\pi) > 0$ implies the existence of an unscheduled job from $L_2'$. \qed
\end{proof}

\begin{theorem}\label{lem:alg1buidlds opt}
 If (\ref{Buffer/5}) holds, then Algorithm 1 constructs an optimal schedule.
\end{theorem}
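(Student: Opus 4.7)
The plan is to prove that Algorithm~1 produces a feasible schedule $\sigma$ with $C_{max}(\sigma) = C_{max}(\sigma^J)$, from which optimality follows because $C_{max}(\sigma^J)$ is a lower bound for the PP-problem on $N$. Two things must be established: (i) both machines are idle-free in $\sigma$, which combined with (\ref{sum_a=sum_b}) forces $C_{max}(\sigma) = \sum_{i \in N'} a_i = \sum_{i \in N'} b_i = C_{max}(\sigma^J)$; and (ii) the buffer capacity $\Omega$ is never exceeded.

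For (i), the start times assigned in lines 8 and 16 make $M_1$ process back-to-back, so the claim reduces to the invariant $R_{i-1}(\pi) \geq a_{\pi(i)}$ for every $i \geq 2$, which is equivalent to $C^1_{\pi(i)}(\sigma) \leq C^2_{\pi(i-1)}(\sigma)$. First I would verify that $a_{\pi(1)} = 0$: the initial state $R_0(\pi) = 0 < 1.5\,a_{max}$ selects $\pi(1) = \pi_0(1)$, which is an $X$-job whenever $X \neq \emptyset$; the degenerate case $X = \emptyset$ corresponds to $Idle_2 = 0$, in which a zero-$a$ job already heads $\pi^J$ and hence $\pi_0$. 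The invariant itself is verified by branch analysis: lines 6 and 14 pick an $L_2'$-job only when the branching condition fails, which forces $R_{i-1}(\pi) \geq 1.5\,a_{max} \geq a_{\pi(i)}$; lines 4 and 12 pick an $L_0'$- or $L_1'$-job, for which the Johnson orderings built into $\pi_0, \pi_1, \pi_2$ combine with the lookahead term $R_{i-1}(\pi) + \Delta_{\pi_2(i_2)} + \mu(i_0) < a_{max}$ in line 3 to certify that postponing an $L_2'$-job cannot drive $R$ below the $a$-value of any remaining $L_0'$-job.

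For (ii), the buffer usage can grow only at moments when a job enters $M_1$, so it suffices to bound it just after some $\pi(k+1)$ starts on $M_1$ at time $l_{k,1}(\pi)$. At that instant the buffer contains exactly the consecutive block $\pi(m), \pi(m+1), \ldots, \pi(k+1)$, where $\pi(m)$ denotes the job currently occupying $M_2$, and this yields the identity
\[
 B \;=\; a_{\pi(k+1)} + R_{m-1}(\pi) + \delta, \qquad 0 \leq \delta < b_{\pi(m)},
\]
where $B$ is the buffer usage. Two sub-cases close the argument. If $\pi(m) \in L_0' \cup L_1'$, the branching conditions in lines 3 and 11 forced $R_{m-1}(\pi) < 2\,a_{max}$ at the moment $\pi(m)$ was appended; combined with $a_{\pi(k+1)} \leq a_{max}$ and $b_{\pi(m)} \leq b_{max}$, this gives $B < 3\,a_{max} + b_{max} \leq \Omega$ by (\ref{Buffer/5}). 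If $\pi(m) \in L_2'$, then $b_{\pi(m)} \leq a_{\pi(m)} \leq a_{max}$, so $B < 2\,a_{max} + R_{m-1}(\pi)$, and it remains to bound $R$ globally. A short enumeration of the three ways $R$ can strictly grow in Algorithm~1---the OR-branch of line 3 yields an increase to at most $1.5\,a_{max} + b_{max}$, the AND-branch of line 3 yields at most $2\,a_{max}$, and line 12 yields at most $2\,a_{max} + (b_{max} - 0.5\,a_{max})$ by exploiting $a_{\pi_1(i_1)} > 0.5\,a_{max}$---establishes $R_i(\pi) \leq 1.5\,a_{max} + \max\{0.5\,a_{max}, b_{max}\}$ for every $i$. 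Substitution produces $B < 3.5\,a_{max} + \max\{0.5\,a_{max}, b_{max}\} \leq \Omega$ by (\ref{Buffer/5}).

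The principal obstacle is the idle-freeness invariant in lines 4 and 12: unlike the $L_2'$-branches, the branching conditions in these lines do not directly lower-bound $R_{i-1}(\pi)$. I expect this to require a coupled argument with the Johnson schedule $\sigma'$ on $N'$, whose idle-free property is guaranteed by the construction of $X$ and $Y$, together with an induction that tracks how the set of jobs scheduled so far in $\sigma$ differs from the prefix of $\sigma'$. The lookahead term $\mu(i_0) + \Delta_{\pi_2(i_2)}$ in line 3 is precisely what sustains this comparison---it forbids committing an $L_2'$-job, and thereby draining $R$, before the remaining $L_0'$-jobs can still be safely absorbed---and formalising the resulting induction will be the central technical step of the proof.
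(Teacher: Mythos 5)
Your overall architecture is right (idle-freeness of both machines plus buffer feasibility forces $C_{max}(\sigma)=C_{max}(\sigma^J)$, hence optimality), and your part (ii) is essentially the paper's argument: the same three-way enumeration gives the same bound $R_i(\pi)\le \frac{3}{2}a_{max}+\max\{\frac{1}{2}a_{max},b_{max}\}$, and adding $2a_{max}$ for the jobs on the two machines yields $\Omega$. But part (i) contains a genuine gap that you yourself flag: you never establish the invariant $R_{i-1}(\pi)\ge a_{\pi(i)}$ for the branches in lines 4 and 12, and this is precisely the step for which the algorithm's thresholds and the splitting of $L_1$ into $L_0'$ and $L_1'$ were designed. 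Your sketch ("the Johnson orderings combine with the lookahead term to certify that postponing an $L_2'$-job cannot drive $R$ below the $a$-value of any remaining $L_0'$-job") points at the wrong mechanism: the lookahead term does not protect the $L_0'$-jobs at all, and the coupled induction against $\sigma'$ that you anticipate is not how the argument closes.

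What actually closes it is a three-stage lower-bound argument on $R$. First, up to the position $k$ of the first $L_2'$-job inserted ahead of its Johnson position, $\pi$ coincides with $\pi^J$, so feasibility on that prefix is simply inherited from $\sigma'$ (no invariant needs to be proved there). Second, every early $L_2'$-insertion in the first loop requires $R_{j-1}(\pi)\ge\frac{3}{2}a_{max}$ beforehand, hence leaves $R_j(\pi)\ge\frac{1}{2}a_{max}$ afterwards, and $R$ only grows on $L_0'\cup L_1'$-jobs; since every $i\in L_0'$ satisfies $a_i\le\frac{1}{2}a_{max}$ \emph{by definition of $L_0'$}, this already gives $R_{j-1}(\pi)\ge a_{\pi(j)}$ for all remaining $L_0'$-jobs. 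Third, the lookahead condition $R(\pi)+\Delta_{\pi_2(i_2)}+\mu(i_0)\ge a_{max}$ at the last early insertion, telescoped over the trailing block of $L_0'$-jobs, yields $R_{h_0}(\pi)\ge a_{max}$ at the end of the $L_0'$ block; together with the $2a_{max}$ threshold in the second loop (which leaves $R\ge a_{max}$ after each $L_2'$-insertion there) this gives $R_{j-1}(\pi)\ge a_{max}\ge a_{\pi(j)}$ for every $L_1'$-job. Without these three pieces — and in particular without noticing that the $\frac{1}{2}a_{max}$ cutoff defining $L_0'$ is exactly what makes the $\frac{3}{2}a_{max}$ threshold sufficient, and that the sole purpose of $\mu$ is to guarantee $R_{h_0}(\pi)\ge a_{max}$ before the $L_1'$-jobs arrive — the feasibility half of the theorem is not proved. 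You should also dispose separately of the claim that a job of the requested type is always available when lines 6 and 14 fire (the paper isolates this as Lemma~\ref{while 1}, using $R_{n'}(\pi)=0$ and monotonicity of $R$).
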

\begin{proof}
The schedule $\sigma$, constructed by Algorithm 1, is feasible if
\begin{enumerate}
\item[\textbf{(a)}] the schedule $\sigma$ has no overlapping jobs on the same machine;
\item[\textbf{(b)}] the schedule $\sigma$ has no overlapping operations of the same job;
\item[\textbf{(c)}] the schedule $\sigma$ does not violate the buffer constraint.
\end{enumerate}

\noindent \textbf{(a)}: This condition holds because, for each $j \geq 2$,
\[
 S_{\pi(j)}^1(\sigma)=l_{j-1,1}(\pi) = l_{j-2,1}(\pi) + a_{\pi(j-1)} = S_{\pi(j-1)}^1(\sigma) + a_{\pi(j-1)} = C_{\pi(j-1)}^1(\sigma),
\]
\[
 S_{\pi(j)}^2(\sigma)=l_{j-1,2}(\pi) = l_{j-2,2}(\pi) + b_{\pi(j-1)} = S_{\pi(j-1)}^2(\sigma) + b_{\pi(j-1)} = C_{\pi(j-1)}^2(\sigma).
\] \\
\noindent\textbf{(b)}:  In what follows, we will consider a few cases to show that there are no operations overlapping in schedule $\sigma$. According to the algorithm any job from $L_0'$ precedes any job from $L_1'$ in the permutation $\pi.$ Assume that
\begin{equation}\label{condition}
    \pi(|L_0'|+|L_1'|+1)=\pi_2(1).
\end{equation}
The equality above implies that all jobs from $L_2'$ succeed any job from $L_1'$ in the permutation $\pi.$ Then it follows that the permutation $\pi$ is Johnson's permutation and the schedule $\sigma$ coincides with the schedule $\sigma'$. The feasibility $\sigma'$ implies that the schedule $\sigma$ has no overlapping operations of the same job.

If (\ref{condition}) does not hold, then in the permutation $\pi$ the job $\pi_2(1)$ is before the job $\pi_1(|L_1'|),$ i.e. $\pi(i)=\pi_2(1)$ for some $i < |L_0'|+|L_1'|+1.$
Thus  either there exists $k \leq |L_0'|$ such that
\[R_{k-1}(\pi)  \geq \frac{3}{2}a_{max}\mbox{ and } R_{k-1}(\pi)  + \Delta_{\pi_2(1)} + \mu(k) \geq a_{max},\]
%\noindent which implies that $\mu(k) > \frac{1}{2}a_{max}$
or there exists $k$ , $ k \leq |L_0'|+|L_1'|$ such that $R_{k-1}(\pi)  > 2a_{max}.$

Let $\pi(h_0)$ be the last job from $L_0'$ and $\pi(h_1)$ be the last job from $L_1'$ in the permutation $\pi,$ i.e., $\pi(h_0)=\pi_0(|L_0'|)$ and $\pi(h_1)=\pi_1(|L_1'|).$ Note that $k < h_1,$ as otherwise we would have (\ref{condition}) satisfied. Partition the permutation $\pi$ into three subsequences: $(\pi(1),\ldots,\pi(k-1)),$ $(\pi(k),\ldots,\pi(h_1)),$ and $(\pi(h_1+1),\ldots,\pi(n')).$

%By virtue of (\ref{eqn:R_up}),
\textbf{Case 1(b): $1 \leq j \leq k-1 $. }
The first $k-1$ jobs in the $\pi$ are from the set $L_0' \cup L_1'$, hence $\sigma$ and $\sigma'$  are the same for the first $k-1$ jobs.
Consequently, for $j=1, \ldots, k-1$, the operations of job $\pi(j)$ do not overlap.

\textbf{Case 2(b): $k \leq j \leq h_0.$}
Observe that this case is possible only if $k \leq |L_0'|$.
%Denote by $j_0=\max\{i<j|\pi(i)\in L_0'\}$ and chose the integer $i_0\leq |L_0'|$ such that $\pi_0(i_0)=\pi(j_0)$.
Consider two sub-cases, which are defined by the set the job $\pi(j)$ belongs to: either $\pi(j)\in L_2'$ or $\pi(j)\in L_0'$.

\textbf{Case 2.1(b): $\pi(j) \in L_2'$.} According to the algorithm either  $R_{j-1}(\pi)  > 2a_{max}$ or $R_{j-1}(\pi)  \geq \frac{3}{2}a_{max}$ and $R_{j-1}(\pi)  + \Delta_{\pi(j)} + \mu(i_0) \geq a_{max},$
where $i_0$ is specified according to Algorithm 1.
Thus, we have
\begin{equation} \label{eqnNB}
R_{j-1}(\pi)  \geq \min\{ 2a_{max}, a_{max} - \Delta_{\pi(j)} - \mu(i_0)\}.
\end{equation}
We have
\begin{eqnarray}
    C_{\pi(j)}^1(\sigma) && = S_{\pi(j)}^1(\sigma) + a_{\pi(j)} = l_{j-1,1}(\pi) + a_{\pi(j)} = l_{j-1,2}(\pi) + a_{\pi(j)} - R_{j-1}(\pi) \nonumber\\
    &&\leq l_{j-1,2}(\pi) + a_{\pi(j)} -  \frac{3}{2}a_{max} \leq  l_{j-1,2}(\pi) = S_{\pi(j)}^2(\sigma).\nonumber
\end{eqnarray}
Thus, the operations of the job $\pi(j)$ do not overlap. Moreover, for the $\pi(j) \in L_2'$ the following inequality holds:
%\begin{equation}
 $$
 R_{j}(\pi) = R_{j-1}(\pi) +b_j - a_j \geq \frac{3}{2}a_{max} - a_{max} \geq \frac{1}{2}a_{max}.\label{eqn:lb1}
$$
%\end{equation}
\noindent Taking into account (\ref{eqn:R_up}), we have that $R_{j}(\pi) \geq \frac{1}{2}a_{max}$ for all $k+1 \leq j \leq h_0.$

\textbf{Case 2.2(b):  $\pi(j) \in L_0'$. }
We have
\begin{eqnarray}
   C_{\pi(j)}^1(\sigma)&& = S_{\pi(j)}^1(\sigma) + a_{\pi(j)} = l_{j-1,1}(\pi) + a_{\pi(j)}= l_{j-1,2}(\pi) + a_{\pi(j)} - R_{j-1}(\pi) \nonumber\\
    && \leq l_{j-1,2}(\pi) + a_{\pi(j)} -  \frac{1}{2}a_{max} \leq  l_{j-1,2}(\pi) = S_{\pi(j)}^2(\sigma).\nonumber
\end{eqnarray}
The last inequality follows from the fact that $a_i\leq \frac{1}{2}a_{max}$ for any $i\in L_0'$. Thus, the operations of the job $\pi(j)$ do not overlap.

Let $\pi(h_2) < \pi(h_0)$ be the last job from $L_2'$ that precedes the job $\pi(h_0)$ in the permutation $\pi.$
 Thus, the jobs $\pi(h_2+1), \ldots,  \pi(h_0)$ belong to  $L_0'.$ We have
\begin{eqnarray}
&&R_{h_0}(\pi) = R_{h_2-1}(\pi) + \Delta_{\pi(h_2)} +\sum_{i=h_2+1}^{h_0} \Delta_{\pi(i)}\nonumber\\
&&= R_{h_2-1}(\pi) + \Delta_{\pi(h_2)} + \mu(h'),\nonumber
\end{eqnarray}
where $h'=|L_0'|+1-h_0+h_2$ is the position of the job $\pi(h_2+1)$ in the permutation $\pi_0.$

Taking into account (\ref{eqnNB}) we obtain
\begin{equation}
R_{h_0}(\pi)  \geq \min\{2a_{\max} + \Delta_{\pi(h_2)} + \mu(h'),  a_{max} \} \geq a_{max}.\label{eqn:lb2}
\end{equation}

\textbf{Case 3(b): $h_0 + 1 \leq j \leq h_1.$ } Consider two sub-cases, which are defined by the set the job $\pi(j)$ belongs to: either $\pi(j)\in L_2'$ or $\pi(j)\in L_1'$.

\textbf{Case 3.1(b): $\pi(j) \in L_2'$.}
Due to step $11$ of the algorithm, we have $R_{j-1}(\pi)\geq 2a_{max}$.  Thus we get
\begin{eqnarray}
  C_{\pi(j)}^1(\sigma) && = S_{\pi(j)}^1(\sigma) + a_{\pi(j)} = l_{j-1,1}(\pi) + a_{\pi(j)}\nonumber\\
  &&= l_{j-1,2}(\pi) + a_{\pi(j)} - R_{j-1}(\pi)\nonumber\\
  && \leq l_{j-1,2}(\pi) + a_{\pi(j)} -  2a_{max} \leq S_{\pi(j)}^2(\sigma). \nonumber
\end{eqnarray}
Thus, the operations of the job $\pi(j)$ do not overlap. Moreover,
%\begin{equation}
 \[
 R_{j}(\pi) = R_{j-1}(\pi) +b_j - a_j \geq 2a_{max} - a_{max} \geq  a_{max}.\label{eqn:lb3}
\]
%\end{equation}
Taking into account (\ref{eqn:R_up}) and (\ref{eqn:lb2}), we have $R_{j}(\pi) \geq  a_{max}$ for all $h_0  \leq j \leq h_1.$

\textbf{Case 3.2(b): $\pi(j) \in L_1'$.} We have
\begin{eqnarray}
  C_{\pi(j)}^1(\sigma) &&= S_{\pi(j)}^1(\sigma) + a_{\pi(j)} = l_{j-1,1}(\pi) + a_{\pi(j)}\nonumber\\
  &&= l_{j-1,2}(\pi) + a_{\pi(j)} - R_{j-1}(\pi)  \nonumber\\
  && \leq l_{j-1,2}(\pi) + a_{\pi(j)} -  a_{max} \leq S_{\pi(j)}^2(\sigma).\nonumber
\end{eqnarray}
Thus, the operations of the job $\pi(j)$ do not overlap.

\textbf{Case 4(b): $h_1+1 \leq j \leq n' $. } Finally, we observe that the $h_1$ first jobs are the same for $\pi$ and $\pi^J.$
Moreover, the machines $M_1$ and $M_2$ work without idle time in both schedules $\sigma$ and $\sigma'.$
Thus, $C_{\pi(h_1)}^1(\sigma)=C_{\pi^J(h_1)}^1(\sigma')$ and $C_{\pi(h_1)}^2(\sigma)=C_{\pi^J(h_1)}^2(\sigma').$
Moreover, $\pi(j) = \pi^J(j)$ for all $j > h_1.$ Hence, the schedules $\sigma$ and $\sigma'$  are the same for the last $n'-h_1$
jobs and feasibility of $\sigma'$  implies that operations of job $\pi(j)$ do not overlap for $j=h_1+1,\ldots, n'.$\\

\noindent\textbf{(c)}:  Firstly, we  obtain an upper bound for the value of $R_{j}(\pi)$ for all $j,$ $0 \leq j \leq n',$
then we use this upper bound to show that in the schedule $\sigma$ the buffer constraints are not violated. Observe that $R_{0}(\pi)=0$. Consider three sub-cases, which are defined by the set the job $\pi(j)$ belongs to: either $\pi(j)\in L_0'$ or $\pi(j)\in L_1'$ or $\pi(j)\in L_2'$.

\textbf{Case 1(c):  $\pi(j) \in L_0'$. } According to the algorithm either $R_{j-1} (\pi) < \frac{3}{2}a_{max}$ or
$ \frac{3}{2}a_{max} \leq R_{j-1}(\pi)  <  2a_{max}$ and $R_{j-1}(\pi)  + \Delta_{\pi_2(i_2)} + \mu(i_0) < a_{max},$
where $i_0$ and $i_2$ are specified according to Algorithm 1.
If $R_{j-1}(\pi)  < \frac{3}{2}a_{max}$ , then
\begin{equation}\label{UB1}
R_{j}(\pi) = R_{j-1}(\pi) + b_j - a_j \leq \frac{3}{2}a_{max} + b_{max}. 						
\end{equation}
If $ \frac{3}{2}a_{max} \leq R_{j-1}(\pi)  <  2a_{max},$ we get
\begin{eqnarray}
R_{j}(\pi) &&= R_{j-1}(\pi) + b_{\pi(j)} - a_{\pi(j)}\nonumber \leq R_{j-1}(\pi) +  \mu(i_0)\nonumber\\
&&\leq a_{max} - \Delta_{\pi_2(i_2)} = a_{max} - b_{\pi_2(i_2)} + a_{\pi_2(i_2)} \leq 2a_{max}. \label{UB2}						
\end{eqnarray}

\textbf{Case 2(c):  $\pi(j) \in L_1'$. }  According to the algorithm $R_{j-1}(\pi)  <  2a_{max}.$
So, we obtain
\begin{equation}\label{UB3}
R_{j}(\pi) = R_{j-1}(\pi) + b_j - a_j \leq 2a_{max} + b_{max} - \frac{1}{2}a_{max} =  \frac{3}{2}a_{max} + b_{max}.						
\end{equation}

\textbf{Case 3(c):  $\pi(j) \in L_2'$. } From (\ref{UB1}) - (\ref{UB3}) we have
\begin{equation} \label{UB}
R_{i}(\pi) \leq  \frac{3}{2}a_{max} + \max\{\frac{1}{2}a_{max}, b_{max}\}.
\end{equation}
   for all $i \in L_0' \cup  L_1'.$ Since $R_{i-1}(\pi)  \geq R_{i}(\pi)$ for any $j$ such that $\pi(j) \in L_2'$
   the inequality (\ref{UB}) holds for all $j.$

For any $j\in N'$ consider the buffer consumption at its starting time $S_j^1(\sigma)$. Let $k(j)$ be
the job with the smallest completion time such that $C_{k(j)}^2(\sigma)\geq S_j^1(\sigma)$. The buffer consumption at $S_j^1(\sigma)$ does
not exceed the buffer requirements  of job $k(j)$ and  all jobs within interval $[C_{k(j)}^1(\sigma),C_{k(j)}^2(\sigma)]$ and the job $j$.
By virtue of (\ref{Buffer/5}) and  (\ref{UB}) this buffer load does not exceed
\[
    a_{max}+R_{k(j)}(\pi)+a_{max} \leq 3.5a_{max} +\max\{0.5a_{max}, b_{max}\} =\Omega,
\]
hence the buffer capacity is observed every time a job starts its processing.

Since, both machines proceed jobs without idle time, the makespan of $\sigma$ coincides with the load of the machine
and thus the schedule $\sigma$ is an optimal schedule.
\qed

\end{proof}

It remains to note, that after removing from the schedule $\sigma$ all jobs from set $X\cup Y$ we obtain a feasible schedule for the original instance $I$ of the problem.
%---------------------running time-------------------------
Further, it is easy to check that if permutations $\pi_0,$ $\pi_1,$ and $\pi_2,$ are specified, the running time of Algorithm 1 linearly depends on $n'$.  In particular, it has been shown in \cite{kononov2019flow} that  if $a_{max}=b_{max}$, then $n' \leq 2n+1$, and such instance of the problem can be solved in $O(n\log n)$ time. However, if the ratio between $a_{max}$ and $b_{max}$ is large enough, the number of jobs in $X \cup Y$, and hence the running time of Algorithm 1 may increase significantly. The following implementation of the algorithm will allow to avoid the increase of the running time.

Firstly, we show that instead of inserting one job from $X$ at each iteration of the first while-loop, we will schedule several jobs from $X$ at the same time if they are to be scheduled sequentially.
 Consider an iteration $i$ of the while-loop on which $i_0 < |L_0'|$ and $\pi(i-1) \in L_2'.$ Denote by $b=\frac{Idle_2}{b_{max}}$, and define
$$\kappa_1 = \min \{k| R_{i-1}(\pi) + kb \geq 2 a_{max}\}$$ and
$$\kappa_2 = \min \{k| R_{i-1}(\pi) + kb \geq 1.5 a_{max} \& R_{i-1}(\pi)  + \Delta_{\pi_2(i_2)} + \mu(i_0)\geq a_{max} \},$$
where $i_0$ is specified according to Algorithm 1.
Observe that the calculation of $\kappa_1$ and  $\kappa_2$ requires $O(1)$ time. Let $\kappa=\min\{ \kappa_1,  \kappa_2, |L_0'| - i_0\}.$ Thus, instead of inserting $\kappa$ jobs from the set $L_0'$ we assign to the schedule one job with the processing time $0$ on the first machine and the processing time $\kappa b$ on the second machine. The number of such new auxiliary jobs does not exceed $|L_2|+1$ and therefore does not exceed $n.$ Observe that after deleting these auxiliary jobs, the resulting schedule will coincide with the schedule $\sigma$ after all jobs  from $X$ were deleted, and therefore the resulting schedule is optimal.

The reduction of the time required for scheduling jobs in $Y$ can be achieved similar to the aggregation of jobs in $X$. If  some job $j\in Y$ is to be scheduled in the $i$th position in $\pi$, then the number of jobs from $Y$ that should be scheduled between $j$ and the next job from $N'\setminus Y$ (or the number of all unscheduled jobs in $Y$ when all jobs in $N'\setminus Y$ have been already scheduled) is computed, and the parameters $l_{i,1}(\pi)$ and $l_{i,2}(\pi)$ are updated accordingly without assigning the starting times for the corresponding jobs from $Y$.

%--------------------------------------------------------------------------------------------------Running time-----------------------------------------
So, taking into account Lemma~\ref{lem:alg1buidlds opt} we get the following result.

\begin{theorem}\label{thm:polyn_case}
Any instance of PP-problem that satisfies (\ref{Buffer/5}) can be solved  in $O(n \log n)$ time.
\end{theorem}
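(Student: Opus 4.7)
The plan is to invoke Theorem~\ref{lem:alg1buidlds opt} to dispense with optimality and then focus exclusively on bounding the running time of Algorithm~1 with the aggregation-based implementation that is sketched immediately after the proof of Theorem~\ref{lem:alg1buidlds opt}. In other words, the schedule that is returned has already been shown to be feasible and optimal whenever (\ref{Buffer/5}) holds; what remains is an accounting argument.

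First I would dispose of the preprocessing. Johnson's rule is applied to $N$ in $O(n\log n)$ time; this also yields $C_{max}(\sigma^J)$, and hence $Idle_1$, $Idle_2$, $|X|$ and $|Y|$, as well as the common $M_1$-processing time $b = Idle_2/|X|$ of any job in $X$ and the common $M_2$-processing time $Idle_1/|Y|$ of any job in $Y$. The partition into $L_0'$, $L_1'$, $L_2'$ and the induced permutations $\pi_0,\pi_1,\pi_2$ are then read off in $O(n)$ time. So preprocessing fits inside $O(n\log n)$.

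The main obstacle is that, run naively, the two while-loops of Algorithm~1 perform $n'$ iterations, and $n' = n + |X| + |Y|$ may be arbitrarily large compared with $n$. The key step is the aggregation trick described after the proof of Theorem~\ref{lem:alg1buidlds opt}: whenever the algorithm is about to produce a maximal run of consecutive $X$-jobs inside the first while-loop, I replace this run by a single macro-job whose $M_1$-processing time is $0$ and whose $M_2$-processing time is $\kappa b$, where $\kappa = \min\{\kappa_1,\kappa_2,|L_0'|-i_0\}$ with $\kappa_1$ and $\kappa_2$ as defined in the excerpt. Each of $\kappa_1$, $\kappa_2$ is the smallest integer solution of an explicit linear inequality in $R_{i-1}(\pi)$, and is therefore obtainable in $O(1)$. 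A symmetric aggregation is carried out for runs of $Y$-jobs in the second while-loop, updating $l_{i,1}(\pi)$ and $l_{i,2}(\pi)$ without generating individual $Y$-starting times.

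The accounting then runs as follows. Each macro-$X$-job is immediately followed in $\pi$ by a job of $L_0'\cup L_1'$ (or else it is the final macro-$X$-job), so the number of macro-$X$-jobs is at most $|L_0'\cup L_1'|+1\le n+1$; analogously the number of macro-$Y$-jobs is at most $|L_2'\setminus Y|+1\le n+1$. Hence the two loops together execute $O(n)$ iterations, each performing $O(1)$ work. Combined with the $O(n\log n)$ preprocessing, the algorithm runs in $O(n\log n)$ time. Correctness is inherited from Theorem~\ref{lem:alg1buidlds opt} together with the observation, already made in the excerpt, that deleting the macro-jobs together with the jobs of $X\cup Y$ from the aggregated schedule produces exactly the schedule on $N$ that Algorithm~1 would yield after deletion of $X\cup Y$; this final schedule on $N$ is feasible and optimal for the original instance.
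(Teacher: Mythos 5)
Your overall route is the paper's: optimality is delegated to Theorem~\ref{lem:alg1buidlds opt}, Johnson's rule supplies the $O(n\log n)$ preprocessing together with $Idle_1$, $Idle_2$, $|X|$, $|Y|$ and the permutations $\pi_0,\pi_1,\pi_2$, and the aggregation of consecutive auxiliary jobs into macro-jobs, each obtainable in $O(1)$ time via $\kappa_1$ and $\kappa_2$, is exactly the device the paper uses to bring the number of loop iterations down to $O(n)$.

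There is, however, a genuine flaw in your accounting for the macro-$X$-jobs. A maximal run of consecutive $X$-jobs in the first while-loop is typically terminated precisely because the algorithm switches to inserting a job of $L_2'$ (this is what $\kappa_1$ and $\kappa_2$ detect: $R$ has crossed the relevant threshold), so a macro-$X$-job need not be followed by a job of $L_0'\cup L_1'$. Worse, the bound $|L_0'\cup L_1'|+1\le n+1$ is false on its face, since $X\subseteq L_0'$ and $|X|$ is exactly the quantity that may be unbounded in $n$. The correct charge goes in the opposite direction, as in the paper: because the $X$-jobs form a prefix of $\pi_0$ and $\pi_0$ is consumed in order, the consecutive insertion of that prefix can only be interrupted by an insertion from $\pi_2$; hence every macro-$X$-job except possibly the first is immediately preceded by a distinct job of $L_2'$ drawn in the first while-loop, which gives at most $|L_2|+1\le n+1$ macro-$X$-jobs. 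Your count of the macro-$Y$-runs is likewise justified by the wrong neighbour (a maximal $Y$-run in the second while-loop is interrupted only by an insertion from $L_1'$, since $Y$ is a suffix of $\pi_2$), although there the resulting bound $O(n)$ happens to survive. With the charging corrected, the rest of your argument goes through and yields the claimed $O(n\log n)$ running time.
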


\begin{cor}
Any instance of PP-problem that satisfies $\Omega \geq 4.5\max\{a_{max}, b_{max}\}$ can be solved  in $O(n \log n)$ time.
\end{cor}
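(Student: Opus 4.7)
The plan is to derive this corollary as an immediate specialization of Theorem \ref{thm:polyn_case}. All the real work, namely the construction of the auxiliary jobs, the proof that Algorithm~1 builds a feasible permutation schedule, and the $O(n\log n)$ implementation via the aggregation of the auxiliary jobs in $X$ and $Y$, has already been carried out. So the only thing I need to show is that the simpler-looking hypothesis $\Omega \geq 4.5\max\{a_{max},b_{max}\}$ implies the more refined hypothesis (\ref{Buffer/5}).

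First I would set $M = \max\{a_{max}, b_{max}\}$ so that $a_{max} \leq M$ and $b_{max} \leq M$. Then I would bound the left-hand side of (\ref{Buffer/5}) by
\[
 3.5 a_{max} + \max\{0.5 a_{max}, b_{max}\} \;\leq\; 3.5 M + \max\{0.5 M,\, M\} \;=\; 3.5 M + M \;=\; 4.5 M,
\]
where the middle equality uses $0.5 M \leq M$. Combining this with the hypothesis $4.5 M \leq \Omega$ gives (\ref{Buffer/5}).

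Once (\ref{Buffer/5}) is established, Theorem~\ref{thm:polyn_case} applies verbatim and yields the $O(n\log n)$ running time on the original instance (after discarding the auxiliary jobs in $X \cup Y$, as noted in the remark following the proof of Theorem~\ref{lem:alg1buidlds opt}). There is no real obstacle here; the only modest point worth spelling out is that the max-of-maxes trick in the displayed inequality is what allows us to collapse the two separate terms $3.5 a_{max}$ and $\max\{0.5 a_{max}, b_{max}\}$ into a single factor of $4.5$ in front of $\max\{a_{max}, b_{max}\}$, which is precisely the simplification offered by the corollary's hypothesis.
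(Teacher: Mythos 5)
Your proposal is correct and matches the paper's (implicit) derivation: the corollary is just the specialization of Theorem~\ref{thm:polyn_case} obtained by bounding $3.5a_{max}+\max\{0.5a_{max},b_{max}\}\leq 4.5\max\{a_{max},b_{max}\}$. Nothing further is needed.
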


\section{Conclusion}\label{sec:conclusion}

This paper contributes to the efforts aimed at establishing the borderline between polynomially solvable and NP-hard cases of the two-machine flow shop problem with the makespan objective function and job-dependent storage requirements by analysing how the variation of a lower bound on the storage capacity, computed as a function of processing times, affects the computational complexity. The paper also strengthens the result in \cite{kononov2019flow} by significantly reducing the lower bound that guarantees the polynomial-time solvability. One of the directions of the further research could be the analysis of the worst-case behaviour of the polynomial-time algorithm, presented in this paper, when this algorithm is applied to the general case of the makespan minimisation problem. Another direction of the future research could be the development of efficient optimisation algorithms for the general case of the makespan minimisation problem.
\begin{acknowledgements}
The research of the first author was supported by the program of fundamental scientific researches of the SB RAS, project 0314-2019-0014.
\end{acknowledgements}
\bibliographystyle{spmpsci}
\bibliography{bibliography-theisis-3}

\end{document}